\newtheorem{theorem}{\bf Theorem}[section]
\newtheorem{lemma}[theorem]{\bf Lemma}
\begin{document}
\title{Common terms of generalized Pell and Narayana's cows sequences}
\author{Bibhu Prasad Tripathy and Bijan Kumar Patel}
\date{}
\maketitle
\begin{abstract}
For an integer $k \geq 2$, let $\{ P_{n}^{(k)} \}_{n}$ be the $k$-generalized Pell sequence which starts with $0, \dots,0,1$($k$ terms) and each term afterwards is the sum of $k$ preceding terms. In this paper, we find all the solutions of the Diophantine equation $P_{n}^{(k)} = N_{m}$ in non-negative integers $(n, k, m)$ with $k \geq 2$, where $\{ N_{m} \}_m$ is the Narayana's cows sequence. Our approach utilizes the lower bounds for linear forms in logarithms of algebraic numbers established by Matveev, along with key insights from the theory of continued fractions.
\end{abstract} 

\noindent \textbf{\small{\bf Keywords}}: $k$-generalized Pell numbers, Narayana numbers, linear forms in logarithms, reduction method. \\
{\bf 2020 Mathematics Subject Classification:} 11B39; 11J86.

\section{Introduction}
The Pell sequence $\{P_n\}_{n\geq 0}$ is a binary recurrence sequence  given by 
\[
P_{n+2} = 2P_{n+1} + P_{n}~~ {\rm for} ~n\geq 0,
\]
with initials $P_{0} = 0$ and $P_{1} = 1$. 

Let $k \geq 2$ be an integer. We consider a generalization of the Pell sequence known as the $k$-generalized Pell sequence, $\{P_{n}^{(k)} \}_{n \geq -(k-2)}$ is given by the recurrence
\begin{equation}\label{eq 1.1}
P_{n}^{(k)} = 2 P_{n-1}^{(k)} +P_{n-2}^{(k)} +\dots+P_{n-k}^{(k)}~\text{for all} \quad n\geq 2,  
\end{equation}
with initials $P_{-(k-2)} ^{(k)} = P_{-(k-3)} ^{(k)} =\dots= P_{0} ^{(k)} = 0 $ and $P_{1} ^{(k)} = 1$. We shall refer to $P_{n}^{(k)}$  as the $n$th $k$-Pell number.  We see that this generalization is a family of sequences, with each new choice of $k$ producing a unique sequence. For example, if $k = 2$, we get $P_{n}^{(2)} = P_{n}$, the $n$th Pell number.

The Narayana's cows sequence $\{N_{m}\}_{m \geq 0}$ is a ternary recurrent sequence given by 
\[
N_{m+3} = N_{m+2} + N_{m}~~ {\rm for} ~m\geq 0,
\]
with initials $N_{0} = N_{1} = N_{2} = 1$. It is the sequence A000930 in the OEIS. Its first few terms are 
\[
1, 1, 1, 2, 3, 4, 6, 9, 13, 19, 28, 41, \dots
\]

There are many literature in number theory on finding the intersection of two recurrent sequences of positive integers. Recently, researchers have taken a keen interest in the challenge of determining the intersection between a $k$-generalized Pell sequence and various other number sequences. For example, one can go through  \cite{Bravo1, Bravo3, Normenyo, Normenyo1}. The objective of this paper is to find all the Narayana numbers in $k$-generalized Pell sequence. To accomplish this, we solve the Diophantine equation
\begin{equation}\label{eq 1.2}
    P_{n}^{(k)} = N_{m}.
\end{equation}
In particular, our main result is the following.
\begin{theorem}\label{thm1}
 All the solutions of the Diophantine equation \eqref{eq 1.2} in positive integers with $k \geq 2$ are given by 
\[
 P_{1}^{(k)} = N_{0} = N_{1} = N_{2}, \quad P_{2}^{(k)} = N_{3}, \quad \text{and} \quad P_{6}^{(4)} = N_{13}
\]
except in the cases $k \geq 3$ which we can additionally have $P_{4}^{(k)} = N_{8}$.
\end{theorem}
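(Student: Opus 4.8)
The plan is to combine the Baker--Matveev machinery with the reduction method, following the template for Diophantine equations involving $k$-generalized sequences. Before attacking the general equation I would assemble the analytic toolkit for both sides. For the $k$-Pell numbers, the characteristic polynomial $x^{k}-2x^{k-1}-x^{k-2}-\cdots-1$ has a unique dominant root $\gamma=\gamma(k)\in(2,3)$ (with $\gamma\to(3+\sqrt5)/2$ as $k\to\infty$), a Dresden-type Binet formula $P_n^{(k)}=g_k(\gamma)\,\gamma^{\,n-1}+E$ with bounded error $|E|<1/2$, and the size bounds $\gamma^{\,n-2}\le P_n^{(k)}\le\gamma^{\,n-1}$. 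For the Narayana numbers the dominant root $\alpha\approx1.4656$ of $x^3-x^2-1$ gives $N_m=c\,\alpha^{m}+(\text{bounded conjugate terms})$ together with $\alpha^{m-2}\le N_m\le\alpha^{m-1}$. A decisive structural fact is that the truncation in \eqref{eq 1.1} is not felt for small indices: for $1\le n\le k+1$ one has $P_n^{(k)}=\mathcal P_n$, where $\mathcal P_n=3\mathcal P_{n-1}-\mathcal P_{n-2}$ is the $k$-independent bisected-Fibonacci sequence $1,2,5,13,34,89,\dots$ (so $\mathcal P_n=F_{2n-1}$). After recording these, I would dispose of the small ranges, and of $k=2$ where the problem is the ordinary Pell--Narayana intersection, by direct computation, so that from then on $n$, $m$ and $k$ may be assumed large.

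Next I would pin down the relative sizes. Comparing $P_n^{(k)}=N_m$ with the bounds above yields a two-sided linear relation $c_1 n\le m\le c_2 n$ (numerically $m\approx 2.4\,n$), so $m$ and $n$ grow together. Substituting the two Binet formulas and isolating the dominant terms gives
\[
\Big|\tfrac{g_k(\gamma)}{c}\,\gamma^{\,n-1}\alpha^{-m}-1\Big|<\frac{C}{\alpha^{m}},
\]
whence a small linear form $\Lambda=(n-1)\log\gamma-m\log\alpha+\log(g_k(\gamma)/c)$. Applying Matveev's theorem to these three logarithms (the field being $\mathbb Q(\gamma,\alpha)$ of degree at most $3k$, with $h(\gamma)=O(1/k)$ and $h(g_k(\gamma))=O(\log k)$) produces a lower bound for $|\Lambda|$ which, set against the upper bound $C\alpha^{-m}$ and $m\asymp n$, forces a bound of the form $n<C\,k^{3}(\log k)^{2}$ for an absolute constant $C$. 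Thus $n$ and $m$ are bounded by a fixed polynomial in $k$.

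The crux is that this bound is not yet finite, because both the degree and the height $h(g_k(\gamma))$ grow with $k$; the resolution is to split on the position of $n$ relative to $k$. If $n\le k+1$, the stabilization fact turns \eqref{eq 1.2} into the $k$-free equation $F_{2n-1}=N_m$, a genuine intersection of two fixed recurrences to which a single Matveev estimate followed by a Baker--Davenport (Dujella--Peth\H{o}) reduction applies; I expect this to yield exactly $F_1=N_0=N_1=N_2$, $F_3=N_3$ and $F_7=N_8$, that is, the universal families $P_1^{(k)},P_2^{(k)},P_4^{(k)}$. The genuinely delicate part is the complementary range $n\ge k+2$, where no closed form is available and one must bound $k$ itself: here I would set up a second linear form in logarithms using the refined estimate that, for $n$ only slightly larger than $k$, the difference $P_n^{(k)}-F_{2n-1}$ is exponentially small, so that the equation again behaves like the $k$-free one up to a controlled error; combining this with $n<C\,k^{3}(\log k)^{2}$ should cap $k$ at an explicit constant.

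Finally, with $k$ bounded by an explicit constant, I would run the reduction method separately for each admissible $k$: for fixed $k$ the numbers $\gamma$, $g_k(\gamma)$ and $\alpha$ are concrete algebraic numbers, so the Baker--Davenport lemma (or its LLL-based variant) reduces the astronomical upper bound on $n$ coming from Matveev down to a small, computer-checkable range. A direct search over the surviving finitely many triples $(n,k,m)$ then produces the sporadic solution $P_6^{(4)}=N_{13}$ and confirms that no others occur, completing the proof. I expect the main obstacle to be controlling the $k$-dependence in the regime $n\ge k+2$, since that is exactly where neither the stabilization shortcut nor a single fixed reduction is directly available.
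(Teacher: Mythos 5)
Your overall architecture does mirror the paper's: dispose of $n\le k+1$ via the stabilization $P_n^{(k)}=F_{2n-1}$; for $n\ge k+2$ apply Matveev to the three-logarithm form built from $g_k(\gamma)\gamma^{n}$ and $c_\alpha\alpha^{m+2}$ over $\mathbb{Q}(\gamma,\alpha)$ of degree at most $3k$, obtaining $n<\mathrm{poly}(k)$; then exploit the exponential closeness of $\gamma$ to $\phi^2$ (equivalently, of $g_k(\gamma)\gamma^n$ to $\phi^{2n}/(\phi+2)\approx F_{2n-1}$) to form a second linear form that bounds $k$; finish with Dujella--Peth\H{o} reduction and a search producing $P_6^{(4)}=N_{13}$. (Two quantitative remarks: the usable estimate is $h(g_k(\gamma))<4k\log\phi+k\log(k+1)=O(k\log k)$, not $O(\log k)$, so the first Matveev step gives $n\ll k^5\log^3 k$ rather than your $k^3(\log k)^2$; and the closeness estimate $g_k(\gamma)\gamma^n=\frac{\phi^{2n}}{\phi+2}(1+\xi)$, $|\xi|<4\phi^{-k/2}$, is valid for all $n<\phi^{k/2}$, so it applies on the whole range $k+2\le n\le \mathrm{poly}(k)$ only once $k$ exceeds an explicit threshold --- $k>360$ in the paper --- not merely for ``$n$ only slightly larger than $k$''.)

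The genuine gap is in your endgame, and it is one of feasibility, which in a Baker-method proof is part of correctness. Matveev applied to your second linear form yields only $k\ll\log n$, which combined with $n<\mathrm{poly}(k)$ caps $k$ at an enormous constant (the paper gets $k<1.51\times10^{18}$). Your plan then runs a Baker--Davenport reduction \emph{separately for each admissible $k$} up to that cap; since each such reduction requires a high-precision continued-fraction computation for the $k$-dependent irrational $\log\gamma(k)/\log\alpha$, this is roughly $10^{18}$ independent computations and cannot be carried out. The missing idea is that the second linear form, $(m+2)\log\alpha-2n\log\phi+\log\left(c_\alpha(\phi+2)\right)$, involves only the fixed numbers $\alpha$, $\phi$, $c_\alpha$ --- no dependence on $k$ --- so a \emph{single} Dujella--Peth\H{o} reduction (with $M$ the absolute bound on $m$), iterated three times, forces $k<342$, contradicting $k>360$ and eliminating all large $k$ at once. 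Only after this does per-$k$ reduction become feasible, because it is then needed only for the $359$ values $k\in[2,360]$, which is exactly how the paper organizes the computation.
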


To establish the proof of Theorem \ref{thm1}, we first find an upper bound for $n$ in terms of $k$ by applying Matveev's result on linear forms in logarithms \cite{Matveev}. When $k$ is small, the theory of continued fractions suffices to lower such bounds and complete the calculations. When $k$ is large, we use the fact that the dominant root of the $k$-generalized Pell sequence is exponentially close to $\phi^2$ {\rm{(see \cite{Bravo}, Lemma 2)}} where $\phi$ denotes the golden section. So we use this estimation in our further calculation with linear forms in logarithms to obtain absolute upper bounds for $n$ which can be reduced by using  Dujella and Peth\"{o}'s result \cite{Dujella}.  In this way, we complete the proof of our main result. Our proof relies on a few preliminary results, which are extensively discussed in the subsequent section.

\section{Preliminary Results}
\subsection{Properties of $k$-generalized Pell sequence}
The characteristic polynomial of the $k$-generalized Pell sequence is  
 \[
 \Phi_{k}(x) = x^k - 2 x^{k-1}  - x^{k-2} - \dots - x -1. 
 \]
The above polynomial is irreducible over $\mathbb{Q} [x]$ and it has one positive real root that is $\gamma := \gamma(k)$ which is located between $ \phi^ 2(1 - \phi ^{-k})$ and $\phi^2$, lies outside the unit circle (see \cite{Bravo2}). The other roots are firmly contained within the unit circle. To simplify the notation, we will omit the dependence on $k$ of $\gamma$ whenever no confusion may arise.

The Binet  formula for $P_{n}^{(k)}$ found in \cite{Bravo2} is
\begin{equation}\label{eq 2.3}
	P_{n}^{(k)} = \displaystyle\sum_{i=1}^{k} g_{k} (\gamma_{i})\gamma_{i}^{n} ,
\end{equation}
where  $\gamma_{i}$ represents the roots of the characteristic polynomial $\Phi_{k}(x)$ and the function $g_{k}$ is  given by
\begin{equation}\label{eq 2.4}
	g_{k}(z) := \frac{z-1}{(k+1)z^2 - 3kz + k -1}, 
\end{equation}
for an integer $k$ $\geq$ 2. Additionally,
it is  also shown in \cite[Theorem~3.1]{Bravo2} that the roots located inside the unit circle have a very minimal influence on the formula \eqref{eq 2.3}, which is given by the approximation 
\begin{equation}\label{eq 2.5}
	\left| P_{n}^{(k)} -  g_{k} (\gamma) \gamma^{n} \right| < \frac{1}{2} \quad \text{holds  for   all} \quad n \geq 2 - k.
\end{equation}
Therefore, for $n \geq 1$ and $k \geq 2$, we have
\begin{equation}\label{eq 2.6}
   P_{n}^{(k)} =  g_{k} (\gamma) \gamma^{n} + e_{k}(n), \quad \text{where} \quad |e_{k}(n)| \leq \frac{1}{2}. 
\end{equation}
Furthermore, it is shown  in \cite[Theorem~3.1]{Bravo2} that the inequality
\begin{equation}\label{eq 2.7}
	\gamma ^ {n-2} \leq P_{n} ^ {(k)} \leq \gamma ^ {n-1} \text{ holds for all } n \geq 1.
\end{equation}
The following result was proved by Bravo and Hererra \cite{Bravo1}.

\begin{lemma}\label{lem 2.1}{\rm{(\cite{Bravo1}, Lemma 2.1)}}.
Let $k \geq 2 $  be an integer. Then we have
\[
0.276 < g_{k}(\gamma) < 0.5 \  and \  \left| g_{k}(\gamma_{i}) \right| < 1 \quad for \quad 2 \leq i \leq k.
\]	
\end{lemma}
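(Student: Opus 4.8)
The plan is to reduce everything to a single clean rational expression for $g_k$ evaluated at \emph{any} root of $\Phi_k$. Multiplying the characteristic polynomial by $(x-1)$ telescopes its tail, giving the identity $(x-1)\Phi_k(x) = x^{k+1} - 3x^k + x^{k-1} + 1$. Hence every root $\rho$ of $\Phi_k$ (note that $1$ is not a root, since $\Phi_k(1) = -k$) satisfies $\rho^{k-1}(\rho^2 - 3\rho + 1) = -1$, that is $\rho^2 - 3\rho + 1 = -\rho^{1-k}$. Writing the denominator of $g_k$ as $(k+1)\rho^2 - 3k\rho + k - 1 = k(\rho^2 - 3\rho + 1) + (\rho^2 - 1)$ and substituting, I obtain the key formula
\[
g_k(\rho) = \frac{\rho - 1}{\rho^2 - 1 - k\rho^{1-k}},
\]
valid simultaneously for $\rho = \gamma$ and for each root $\gamma_i$ inside the unit circle. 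All three estimates will be read off from this one expression.

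For $g_k(\gamma)$ I first record the enclosure $2 < \gamma < \phi^2$: the upper bound is given in the excerpt, and $\gamma > 2$ follows from $\Phi_k(2) = -(2^{k-1}-1) < 0$ together with $\Phi_k(+\infty) = +\infty$. Since $k\gamma^{1-k} > 0$, dropping that term shrinks the (positive) denominator, so $g_k(\gamma) > \frac{\gamma-1}{\gamma^2-1} = \frac{1}{\gamma+1} > \frac{1}{\phi^2+1} > 0.276$, which is the lower bound. For the upper bound $g_k(\gamma) < \tfrac12$, after checking that the denominator is positive I reduce the inequality $2(\gamma-1) < \gamma^2 - 1 - k\gamma^{1-k}$ to $(\gamma-1)^2 > k\gamma^{1-k}$; this holds because $\gamma > 2$ forces $(\gamma-1)^2 > 1$, while the same bound gives $k\gamma^{1-k} < k\,2^{1-k} \le 1$ for all $k \ge 2$.

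For the roots $\gamma_i$ with $|\gamma_i| < 1$ ($2 \le i \le k$) the same formula is estimated by the triangle inequality, the point being that $|\gamma_i| < 1$ makes the term $k\gamma_i^{1-k}$ dominant. Since $|\gamma_i|^{1-k} > 1$, the reverse triangle inequality yields $\bigl|\gamma_i^2 - 1 - k\gamma_i^{1-k}\bigr| \ge k|\gamma_i|^{1-k} - (|\gamma_i|^2 + 1) > k - 2$, while the numerator satisfies $|\gamma_i - 1| < 2$. Thus $|g_k(\gamma_i)| < \frac{2}{k-2} < 1$ for every $k \ge 5$.

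The main obstacle is that this uniform estimate degenerates exactly for the small indices $k \in \{2,3,4\}$, where $k-2 \le 2$. These I expect to settle by a direct finite computation: for each such $k$ the polynomial $\Phi_k$ has fixed degree, so I can compute its roots inside the unit circle numerically and verify $|g_k(\gamma_i)| < 1$ (and simultaneously re-check the $g_k(\gamma)$ bounds) case by case. No new idea is needed there, only a bounded numerical check; the conceptual content is entirely contained in the identity $g_k(\rho) = (\rho-1)/(\rho^2 - 1 - k\rho^{1-k})$ together with the enclosure $2 < \gamma < \phi^2$.
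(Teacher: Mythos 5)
Your argument is necessarily an independent route, because the paper contains no proof of this lemma at all: it is imported verbatim, with citation, from Bravo and Herrera \cite{Bravo1}. I checked your core identity and it is correct: multiplying out gives $(x-1)\Phi_k(x)=x^{k+1}-3x^k+x^{k-1}+1$, no root $\rho$ equals $0$ or $1$ (as $\Phi_k(0)=-1$, $\Phi_k(1)=-k$), so $\rho^2-3\rho+1=-\rho^{1-k}$, and the decomposition $(k+1)\rho^2-3k\rho+k-1=k(\rho^2-3\rho+1)+(\rho^2-1)$ yields $g_k(\rho)=(\rho-1)/(\rho^2-1-k\rho^{1-k})$. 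The real-root estimates then go through: $\Phi_k(2)=-(2^{k-1}-1)<0$ gives $\gamma>2$, hence $k\gamma^{1-k}<k2^{1-k}\le 1$, the denominator exceeds $\gamma^2-2>0$, and both $g_k(\gamma)>1/(\gamma+1)>1/(\phi^2+1)>0.276$ and the reduction of $g_k(\gamma)<1/2$ to $(\gamma-1)^2>1\ge k2^{1-k}$ are valid. This is a clean, self-contained proof of a statement the paper merely quotes.

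Two remarks that close the one loose end you left. First, your uniform estimate already covers $k=4$, not just $k\ge 5$: you proved the numerator is strictly less than $2$ and the denominator strictly greater than $k-2=2$, and the chain $|g_4(\gamma_i)|<2/|{\rm den}|<2/2=1$ is exactly the desired conclusion; so only $k\in\{2,3\}$ escape. Second, those two cases need no floating-point search. For $k=2$ the inner root is $1-\sqrt{2}$ and an exact computation gives $g_2(1-\sqrt{2})=-\sqrt{2}/4$, of modulus well below $1$. For $k=3$, note $\Phi_3(x)=x^2(x-2)-(x+1)<0$ for $|x|<1$, so the two inner roots are a complex-conjugate pair; since the product of the moduli of all three roots is $1$, each inner root has modulus $\gamma^{-1/2}$, whence $k|\gamma_i|^{1-k}=3\gamma>6$ and your denominator bound improves to $>4$, giving $|g_3(\gamma_i)|<1/2$. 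With these substitutions the deferred numerical check disappears entirely and your proof is complete for all $k\ge 2$.
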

\noindent
Furthermore, they showed that the logarithmic height of $g_{k} (\gamma)$ satisfies
\begin{equation}\label{eq 2.8}
    h(g_{k}(\gamma)) < 4k\log \phi + k \log(k+1) \quad \text{for all} \quad k \geq 2.
\end{equation}

\begin{lemma}\label{lem 2.2}{\rm{(\cite{Bravo}, Lemma 2)}}.
If $k \geq 30$ and $n \ge 1$ are integers that satisfies $n < \phi^{k/2}$, then
\begin{equation}\label{eq 2.9}
   g_{k} (\gamma) \gamma^{n} = \frac{\phi^{2n}}{\phi + 2}(1 + \xi), \quad \text{where} \quad  |\xi| < \frac{4}{\phi^{k/2}}.
\end{equation}
\end{lemma}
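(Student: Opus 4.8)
The plan is to show directly that the two factors $g_k(\gamma)$ and $\gamma^n$ are each exponentially close (in $k$) to $\tfrac{1}{\phi+2}$ and $\phi^{2n}$ respectively, and then to combine the two relative errors. The starting point is a cleaner form of the minimal equation for $\gamma$. Multiplying the characteristic polynomial by $(x-1)$ telescopes the tail $x^{k-2}+\dots+x+1$ and yields
\[
(x-1)\Phi_k(x) = x^{k+1} - 3x^k + x^{k-1} + 1,
\]
so that $\gamma$ satisfies $\gamma^{k-1}(\gamma^2 - 3\gamma + 1) = -1$, i.e. $\gamma^2 - 3\gamma + 1 = -\gamma^{-(k-1)}$. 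The crucial arithmetic fact is that $\phi^2$ is an \emph{exact} root of $x^2 - 3x + 1$ (equivalently $\phi^4 = 3\phi^2 - 1$, which follows from $\phi^2 = \phi+1$); hence the displayed relation says precisely that $\gamma$ solves $x^2 - 3x + 1 = 0$ up to the exponentially small perturbation $\gamma^{-(k-1)}$, consistent with the bound $\phi^2(1-\phi^{-k}) < \gamma < \phi^2$ quoted from \cite{Bravo2}. Accordingly I write $\gamma = \phi^2(1-\eta)$ and record that $0 < \eta < \phi^{-k}$.

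Next I would handle $g_k(\gamma)$. Rewriting the denominator of \eqref{eq 2.4} as
\[
(k+1)z^2 - 3kz + k - 1 = k(z^2 - 3z + 1) + (z-1)(z+1),
\]
and substituting $z=\gamma$ together with $\gamma^2 - 3\gamma + 1 = -\gamma^{-(k-1)}$ gives
\[
g_k(\gamma) = \frac{\gamma-1}{(\gamma-1)(\gamma+1) - k\gamma^{-(k-1)}} = \frac{1}{\gamma+1}\cdot\frac{1}{\,1 - k\gamma^{-(k-1)}/((\gamma-1)(\gamma+1))\,}.
\]
Since $\gamma$ is bounded away from $1$ and $\gamma^{-(k-1)}$ decays like $\phi^{-2k}$, the second factor is $1 + O(k\phi^{-2k})$, an error far smaller than the target rate. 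It then remains to compare $\tfrac{1}{\gamma+1}$ with $\tfrac{1}{\phi+2}$: from $\gamma+1 = \phi^2 + 1 - \phi^2\eta = (\phi+2)\left(1 - \phi^2\eta/(\phi+2)\right)$ and $\eta < \phi^{-k}$ one obtains $\tfrac{1}{\gamma+1} = \tfrac{1}{\phi+2}\left(1 + O(\phi^{-k})\right)$, where I use the identity $\phi^2 + 1 = \phi + 2$.

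The dominant error, and the one place where the hypothesis $n < \phi^{k/2}$ is essential, is the comparison of $\gamma^n$ with $\phi^{2n}$. From $\gamma^n = \phi^{2n}(1-\eta)^n$ and Bernoulli's inequality $1 - n\eta \le (1-\eta)^n \le 1$ for $n \ge 1$, I get
\[
\left| \frac{\gamma^n}{\phi^{2n}} - 1 \right| \le n\eta < n\phi^{-k} < \phi^{k/2}\phi^{-k} = \phi^{-k/2}.
\]
This is exactly why the accuracy in $k$ degrades from $\phi^{-k}$ down to $\phi^{-k/2}$. Finally I would multiply the three estimates, write $1+\xi$ as the product of the three $(1+O(\cdot))$ factors, and verify that for $k \ge 30$ the secondary $O(\phi^{-k})$ and $O(k\phi^{-2k})$ contributions are absorbed so that the total relative error satisfies $|\xi| < 4\phi^{-k/2}$, the coarse constant $4$ leaving comfortable room above the single dominant term $\phi^{-k/2}$. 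I expect the only genuine obstacle to be bookkeeping: confirming that the three relative errors combine additively to first order and that the constant $4$ really dominates their sum for every $k \ge 30$. Everything else follows mechanically from the factorization of $\Phi_k$ and the identity $\phi^2 + 1 = \phi + 2$.
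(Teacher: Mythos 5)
Your proof is correct, but a point of context first: the paper you are being checked against does not prove this lemma at all --- it is quoted verbatim from Bravo and Herrera (\cite{Bravo}, Lemma 2), so the comparison has to be made with that cited source rather than with anything in this paper. Your argument has the same overall architecture as the literature proof: split the claim into the comparison of $\gamma^n$ with $\phi^{2n}$ (the dominant error, where writing $\gamma = \phi^2(1-\eta)$ with $0<\eta<\phi^{-k}$ and applying Bernoulli's inequality gives relative error at most $n\eta < \phi^{-k/2}$, the only place the hypothesis $n<\phi^{k/2}$ is used) and the comparison of $g_k(\gamma)$ with $1/(\phi+2)$ (an error of the much smaller order $\phi^{-k}$), then combine the factors multiplicatively. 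Where you genuinely differ is in the second comparison: Bravo--Herrera rely on the exact evaluation $g_k(\phi^2)=1/(\phi+2)$ together with an analytic (mean-value type) bound on $|g_k(\gamma)-g_k(\phi^2)|$ imported from Bravo--Herrera--Luca \cite{Bravo2}, whereas you work purely algebraically: the telescoped identity $(x-1)\Phi_k(x)=x^{k+1}-3x^k+x^{k-1}+1$ gives $\gamma^2-3\gamma+1=-\gamma^{-(k-1)}$, and the decomposition $(k+1)z^2-3kz+k-1=k(z^2-3z+1)+(z-1)(z+1)$ then exhibits $g_k(\gamma)$ exactly as $\frac{1}{\gamma+1}$ times an explicit factor $1+O(k\gamma^{-(k-1)})$. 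This is a nice self-contained alternative: it avoids any appeal to derivatives of $g_k$ and makes every error term explicit, at the cost of slightly more algebra. I verified the individual steps ($\phi^4-3\phi^2+1=0$, $\phi^2+1=\phi+2$, the two polynomial identities, and the three error bounds, using $\gamma>2.61$ for $k\geq 30$ so that $(\gamma-1)(\gamma+1)>5.8$ and $k\gamma^{-(k-1)}$ is negligible); the deferred bookkeeping indeed goes through, with the combined relative error comfortably below $\phi^{-k/2}+2\phi^{-k}$, far inside the stated bound $4\phi^{-k/2}$.
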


\subsection{Properties of Narayana's cows sequence}
The characteristic polynomial of Narayana's cows sequence is 
\[
f(x) = x^{3} - x^{2} - 1,
\]
which is irreducible over $\mathbb{Q} [x]$ and has roots $\alpha$, $\beta$ and $\delta$ given by
\[
\alpha = \frac{1}{3} + \left( \frac{29}{54} + \sqrt{\frac{31}{108}} \right)^{1/3} + \left( \frac{29}{54} - \sqrt{\frac{31}{108}} \right)^{1/3}, 
\]
\[
\beta = \frac{1}{3} + w \left( \frac{29}{54} + \sqrt{\frac{31}{108}} \right)^{1/3} + w^{2} \left( \frac{29}{54} - \sqrt{\frac{31}{108}} \right)^{1/3}, 
\]
\[
\delta = \Bar{\beta} = \frac{1}{3} + w \left( \frac{29}{54} + \sqrt{\frac{31}{108}} \right)^{1/3} + w^{2} \left( \frac{29}{54} + \sqrt{\frac{31}{108}} \right)^{1/3}, 
\]
where $w = \frac{-1+i\sqrt{3}}{2}$. The Binet formula for the Narayana's cows sequence is given by
\begin{equation}\label{eq 2.10}
    N_{m} = p \alpha^{m} + q \beta^{m} + r \delta^{m} \quad \text{for all} \quad m \geq 0,
\end{equation}
where 
\[
p = \frac{\alpha}{(\alpha - \beta)(\alpha - \delta)}, \quad q = \frac{\beta}{(\beta - \alpha)(\beta - \delta)}, \quad r = \frac{\delta}{(\delta - \alpha )(\delta - \beta)}. 
\]
The formula \eqref{eq 2.10} can also be written in the form
\begin{equation}\label{eq 2.11}
N_{m} = C_{\alpha} \alpha^{m+2} + C_{\beta} \beta^{m+2} + C_{\delta} \delta^{m+2} \quad \text{for all} \quad m \geq 0,    
\end{equation}
where
\[
C_{x} = \frac{1}{x^{3} +2}, \quad x \in \{ \alpha, \beta, \delta \}.
\]
The coefficient $C_{\alpha}$ has the minimal polynomial $31x^{3} - 31x^{2} + 10x - 1$  over $\mathbb{Z}$ and
all the zeros of this polynomial lie strictly inside the unit circle.  One can approximate the following:
\[
\alpha \approx 1.46557; \  |\beta| = |\delta| \approx 0.826031; \  | C_{\beta} \beta^{m+2} + C_{\delta} \delta^{m+2}| < 1/2 \quad \text{for all} \quad m \geq 1.
\]
\begin{lemma}\label{lem 2.3}
For every positive integer $m \geq 1$, we have \begin{equation}\label{eq 2.12}
\alpha^{m-2} \leq N_{m} \leq \alpha^{m-1}. 
\end{equation}
\end{lemma}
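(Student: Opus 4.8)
The plan is to establish the double inequality $\alpha^{m-2} \leq N_m \leq \alpha^{m-1}$ by induction on $m$, which is the standard approach for bounds of this type on linear recurrence sequences. First I would verify the base cases directly. Using the numerical value $\alpha \approx 1.46557$, one checks that for the small indices $m = 1, 2, 3$ (and possibly a few more to accommodate the three-term recurrence, since $N_{m+3} = N_{m+2} + N_m$ has depth three) the claimed inequalities hold against the tabulated values $N_1 = 1$, $N_2 = 1$, $N_3 = 2$, $N_4 = 3$, $N_5 = 4$. This requires confirming, for instance, that $\alpha^{-1} \leq 1 \leq \alpha^{0}$ and $\alpha^{0} \leq 1 \leq \alpha^{1}$, which follow immediately from $1 < \alpha < 2$.

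Next I would carry out the inductive step. Assuming the bounds hold for all indices up to some $m + 2$, I would bound $N_{m+3} = N_{m+2} + N_m$ from above and below. For the upper bound, $N_{m+3} = N_{m+2} + N_m \leq \alpha^{m+1} + \alpha^{m-1} = \alpha^{m-1}(\alpha^2 + 1)$, and I would want this to be at most $\alpha^{m+2} = \alpha^{m-1}\cdot \alpha^{3}$; this reduces to checking $\alpha^2 + 1 \leq \alpha^3$, which is precisely the defining relation $\alpha^3 = \alpha^2 + 1$ of the characteristic polynomial $f(x) = x^3 - x^2 - 1$, so equality holds and the upper bound propagates. Symmetrically, for the lower bound, $N_{m+3} = N_{m+2} + N_m \geq \alpha^{m} + \alpha^{m-2} = \alpha^{m-2}(\alpha^2 + 1) = \alpha^{m-2}\cdot \alpha^3 = \alpha^{m+1}$, which is exactly the desired lower bound $\alpha^{(m+3)-2}$. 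Thus both inequalities close under the recurrence.

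The pleasant feature here, and the reason the induction works cleanly, is that the characteristic equation $\alpha^3 = \alpha^2 + 1$ makes the inductive step an exact identity rather than a genuine inequality, so there is essentially no slack to lose. The only real care needed is in the base cases: because the recurrence skips a term (it relates $N_{m+3}$ to $N_{m+2}$ and $N_m$ but not $N_{m+1}$), I must ensure enough initial indices are checked so that every residue class modulo the recurrence structure is seeded correctly. Concretely I expect to need the cases $m = 1, 2, 3$ verified by hand before the induction takes over.

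The main obstacle, such as it is, will simply be the bookkeeping at the low end: the stated lemma begins at $m = 1$ rather than $m = 0$, and since $N_0 = 1$ would give $\alpha^{-2} \leq 1 \leq \alpha^{-1}$, where $\alpha^{-1} \approx 0.682 < 1$, the upper bound fails at $m = 0$. This explains the hypothesis $m \geq 1$ and signals that I should not attempt to extend the induction downward. Hence I would state the base verification carefully for $m \in \{1, 2, 3\}$, confirm each against the exact recurrence identity in the inductive step, and conclude that \eqref{eq 2.12} holds for all $m \geq 1$.
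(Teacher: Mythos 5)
Your proof is correct and takes the same approach as the paper, which merely states that the lemma ``can be easily proved by the method of induction on $m$'' without supplying details. Your write-up fills in exactly that induction: the base cases $m \in \{1,2,3\}$ are the right ones for the depth-three recurrence, and the inductive step closes exactly via the characteristic identity $\alpha^{3} = \alpha^{2} + 1$, just as one would expect.
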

\begin{proof}
This can be easily proved by the method of induction on $m$.
\end{proof}

\subsection{Linear forms in logarithms}
Let $\gamma$ be an algebraic number of degree $d$ with a minimal primitive polynomial 
\[
f(Y):= b_0 Y^d+b_1 Y^{d-1}+ \cdots +b_d = b_0 \prod_{j=1}^{d}(Y- \gamma^{(j)}) \in \mathbb{Z}[Y],
\]
where the $b_j$'s are relatively prime integers, $b_0 >0$, and the $\gamma^{(j)}$'s are conjugates of $\gamma$. Then the \emph{logarithmic height} of $\gamma$ is given by
\begin{equation}\label{eq 2.13}
h(\gamma)=\frac{1}{d}\left(\log b_0+\sum_{j=1}^{d}\log\left(\max\{|\gamma^{(j)}|,1\}\right)\right).
\end{equation}
With the above notation, Matveev (see  \cite{Matveev} or  \cite[Theorem~9.4]{Bugeaud}) proved the following result.

\begin{theorem}\label{thm2}
Let $\eta_1, \ldots, \eta_s$ be positive real algebraic numbers in a real algebraic number field $\mathbb{L}$ of degree $d_{\mathbb{L}}$. Let $a_1, \ldots, a_s$ be non-zero  integers such that
\[
\Lambda :=\eta_1^{a_1}\cdots\eta_s^{a_s}-1 \neq 0.
\]
Then
\[
- \log  |\Lambda| \leq 1.4\cdot 30^{s+3}\cdot s^{4.5}\cdot d_{\mathbb{L}}^2(1+\log d_{\mathbb{L}})(1+\log D)\cdot B_1 \cdots B_s,
\]
where
\[
D\geq \max\{|a_1|,\ldots,|a_s|\},
\]
and
\[
B_j\geq \max\{d_{\mathbb{L}}h(\eta_j),|\log \eta_j|, 0.16\}, ~ \text{for all} ~ j=1,\ldots,s.
\]
\end{theorem}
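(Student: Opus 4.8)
The statement to be proved is a quantitative lower bound for a nonzero linear form in logarithms of algebraic numbers — this is Matveev's theorem, and the only known route to it is Baker's method in transcendence theory, refined through the interpolation-determinant formalism. The plan is to argue by contradiction: writing $\Lambda' = a_1 \log \eta_1 + \cdots + a_s \log \eta_s$ for suitable determinations of the logarithms, so that $\Lambda = \eta_1^{a_1}\cdots\eta_s^{a_s} - 1$ is the multiplicative form in the statement and $|\Lambda| \asymp |\Lambda'|$ when $\Lambda'$ is small, one assumes that $|\Lambda|$ is smaller than $\exp(-C)$ with $C$ the right-hand side of the claimed inequality, and derives a contradiction. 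First I would reduce to the case that $\log\eta_1, \ldots, \log\eta_s$ are $\mathbb{Q}$-linearly independent and that the $a_j$ are not too unbalanced, and I would fix the main parameters (the degrees $L_j$ of an auxiliary polynomial, and the number $T$ and spacing of interpolation points) as explicit functions of $s$, $d_{\mathbb{L}}$, the $B_j$, and $\log D$.

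Second, the heart of the construction is an auxiliary function. Using Siegel's lemma (the Thue–Siegel pigeonhole principle over $\mathbb{Z}$, in the sharpened form that keeps coefficient heights small), I would produce a nonzero integer polynomial $P(X_1,\ldots,X_s)$ of degree at most $L_j$ in $X_j$ such that the associated exponential-type function
\[
\Phi(z) = P\!\left(\eta_1^{\,z}, \ldots, \eta_s^{\,z}\right)
\]
vanishes to high order at the integer points $z = 0,1,\ldots,T-1$. Introducing the single complex variable $z$ and the values $\eta_j^{z} = \exp(z\log\eta_j)$ makes $\Phi$ an entire function whose Taylor coefficients are algebraic numbers lying in $\mathbb{L}$, with heights controlled by the $B_j$; solvability of the underlying linear system is guaranteed precisely because the number of unknown coefficients exceeds the number of vanishing conditions, which is where the parameter balancing first enters.

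Third comes the analytic extrapolation. Because $\Lambda'$ is assumed extraordinarily small, the functions $\eta_j^{z}$ are, along the relevant arithmetic progression, nearly multiplicatively dependent, so by the maximum-modulus principle (Schwarz's lemma, in the classical ``extrapolation'' form) the vanishing of $\Phi$ propagates: $\Phi$ must vanish to high order at many more integer points than it was built to. Confronting this with a zero (multiplicity) estimate on the algebraic torus $\mathbb{G}_m^s$ — the nonvanishing of a suitable interpolation determinant, in Laurent's formulation — shows that $P$ would have to be identically zero, contradicting the construction. Iterating this dichotomy and optimizing all parameters yields the explicit constant; Matveev's specific contribution, and the reason the bound is polynomial rather than factorial in $s$, is an inductive treatment of the number of logarithms together with a careful choice of the interpolation data.

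The main obstacle, where all the quantitative work is concentrated, is the combination of the zero estimate with the parameter optimization: one must ensure simultaneously that Siegel's lemma is solvable (enough free coefficients), that the extrapolation gains are large enough to force a contradiction, and that the resulting numerical constants collapse to the stated $1.4\cdot 30^{s+3}\cdot s^{4.5}\cdot d_{\mathbb{L}}^2(1+\log d_{\mathbb{L}})(1+\log D)\cdot B_1\cdots B_s$. Obtaining the sharp exponent $4.5$ on $s$ and the clean dependence on $d_{\mathbb{L}}$ and $\log D$ requires Matveev's refined interpolation-determinant bounds rather than the cruder Baker–Wüstholz estimates; accordingly, in the present paper I would not reproduce this argument but simply invoke the theorem as stated in \cite{Matveev} (equivalently \cite[Theorem~9.4]{Bugeaud}).
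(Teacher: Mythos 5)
The paper gives no proof of this statement at all: it is Matveev's theorem, quoted verbatim from the literature with citations to \cite{Matveev} and \cite[Theorem~9.4]{Bugeaud}, which is exactly what you conclude should be done. Your sketch of Baker's method (auxiliary polynomial via Siegel's lemma, extrapolation, zero estimate, parameter optimization) is a fair outline of how such bounds are actually proved, but since you correctly end by simply invoking the cited result rather than claiming to reprove it, your treatment matches the paper's.
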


\subsection{Reduction method}
Here, we present the following result due to Dujella and Peth\"{o} \cite[Lemma~5 (a)]{Dujella} which is a generalization of a result of Baker and Davenport's result \cite{Baker}.
\begin{lemma}\label{lem 2.5}
Let $\widehat{\tau}$ be an irrational number, and let $A,C,\widehat{\mu}$ be some real numbers with $A>0$ and $C>1$. Assume that $M$ is a positive integer, and let $p/q$ be a convergent of the continued fraction of the irrational $\widehat{\tau}$ such that $q > 6M$. Put \[\epsilon:=||\widehat{\mu} q||-M||\widehat{\tau} q||,
\]
where $||\cdot||$ denotes the distance from the nearest integer.  If $\epsilon >0$, then there is no solution to the inequality 
\[
0< |r \widehat{\tau}-s+\widehat{\mu}| <AC^{-t},
\]
in positive integers $r$, $s$ and $t$ with
\[
r \leq M \quad\text{and}\quad t \geq \frac{\log(Aq/\epsilon)}{\log C}.
\]
\end{lemma}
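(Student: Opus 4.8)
The plan is to argue by contradiction and to extract from the convergent $p/q$ the one structural property that an arbitrary rational approximation lacks: that $p$ is the \emph{nearest} integer to $q\widehat{\tau}$, so that $|q\widehat{\tau}-p| = \|q\widehat{\tau}\|$. Because $p/q$ is a convergent of $\widehat{\tau}$ and the hypothesis $q > 6M \ge 6$ forces $q \geq 2$, the standard estimate $|q\widehat{\tau}-p| < 1/q < 1/2$ holds; hence $p$ is indeed the nearest integer to $q\widehat{\tau}$ and $|q\widehat{\tau}-p| = \|q\widehat{\tau}\|$. This is the only point at which continued-fraction theory enters, and it is exactly what makes the definition of $\epsilon$ via $\|q\widehat{\tau}\|$ the right one.

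Suppose, for contradiction, that positive integers $r \leq M$, $s$, and $t \geq \log(Aq/\epsilon)/\log C$ satisfy $0 < |r\widehat{\tau} - s + \widehat{\mu}| < AC^{-t}$. Write $\Lambda := r\widehat{\tau} - s + \widehat{\mu}$ and multiply through by $q$. The decisive identity is
\[
q\widehat{\mu} - (sq - rp) = q\Lambda - r(q\widehat{\tau}-p),
\]
obtained by expanding $q\Lambda = rq\widehat{\tau} - sq + q\widehat{\mu}$ and substituting $rq\widehat{\tau} = rp + r(q\widehat{\tau}-p)$. Since $sq - rp$ is an integer, the left-hand side is $q\widehat{\mu}$ minus an integer, so its absolute value is at least the distance from $q\widehat{\mu}$ to its nearest integer. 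I would therefore write
\[
\|q\widehat{\mu}\| \;\leq\; |q\widehat{\mu} - (sq-rp)| \;=\; |q\Lambda - r(q\widehat{\tau}-p)| \;\leq\; q|\Lambda| + r\,|q\widehat{\tau}-p|.
\]

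Next I would invoke the two facts above. Using $|q\widehat{\tau}-p| = \|q\widehat{\tau}\|$ and $r \leq M$, the last display gives $\|q\widehat{\mu}\| \leq q|\Lambda| + M\|q\widehat{\tau}\|$, that is,
\[
q|\Lambda| \;\geq\; \|q\widehat{\mu}\| - M\|q\widehat{\tau}\| \;=\; \epsilon.
\]
On the other hand, $t \geq \log(Aq/\epsilon)/\log C$ together with $C > 1$ yields $C^{t} \geq Aq/\epsilon$, whence $AC^{-t} \leq \epsilon/q$ and therefore $q|\Lambda| < q\,AC^{-t} \leq \epsilon$. The two bounds $q|\Lambda| \geq \epsilon$ and $q|\Lambda| < \epsilon$ are incompatible, and this contradiction shows that no such solution $(r,s,t)$ can exist, proving the lemma.

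The only genuine subtlety—and the step I would treat most carefully—is the passage that replaces the unknown integer $sq - rp$ by a distance to the nearest integer: it is precisely the trivial-looking inequality $\|q\widehat{\mu}\| \leq |q\widehat{\mu} - N|$, valid for every integer $N$, that lets me extract the clean lower bound $q|\Lambda| \geq \epsilon$ with no case analysis on the sign or magnitude of $sq-rp$. Everything else is bookkeeping: checking that the hypotheses $A>0$, $C>1$, and $\epsilon>0$ keep all the logarithms and divisions well defined, and confirming that $q>6M$ secures the best-approximation property $|q\widehat{\tau}-p| = \|q\widehat{\tau}\|$ on which the whole comparison rests.
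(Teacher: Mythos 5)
Your proof is correct. Note that the paper does not prove this lemma at all --- it is quoted verbatim, with a citation, from Dujella and Peth\"{o} (Lemma 5(a) of their paper); your argument (multiply $\Lambda$ by $q$, use the best-approximation property of the convergent to get $|q\widehat{\tau}-p|=\|q\widehat{\tau}\|$, then trap $q|\Lambda|$ between the lower bound $\epsilon$ and the upper bound $qAC^{-t}\leq\epsilon$) is precisely the standard proof found in that reference, so there is nothing to correct or to contrast.
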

\subsection{Useful Lemmas}
We conclude this section by recalling two lemmas that we will need in this work.

\begin{lemma}\label{lem 2.6} {\rm{(\cite{Weger}, Lemma 2.2)}}
Let $a, x \in \mathbb{R}$. If $0< a < 1$ and $|x| < a$, then 
\[
|\log(1+x)| < \frac{-\log(1-a)}{a}\cdot |x|
\]
and 
\[
|x| < \frac{a}{1 - e^{-a}} \cdot |e^{x} - 1|.
\]
\end{lemma}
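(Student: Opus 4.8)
The plan is to prove the two inequalities separately, each by reducing it to the monotonicity of a single auxiliary real function, preceded in the first case by a power-series / triangle-inequality estimate. Throughout I assume $x \neq 0$, since for $x = 0$ both sides of each inequality vanish and the strict claim is to be read in the nondegenerate regime in which the lemma is actually applied.

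For the first inequality, I would start from the Taylor expansion $\log(1+x) = \sum_{n \ge 1} \frac{(-1)^{n-1}}{n} x^n$, which converges since $|x| < a < 1$, and apply the triangle inequality term by term to get $|\log(1+x)| \le \sum_{n \ge 1} \frac{|x|^n}{n} = -\log(1 - |x|)$. It then suffices to show $-\log(1-|x|) < \frac{-\log(1-a)}{a}\,|x|$, that is, that the function $g(t) := \frac{-\log(1-t)}{t}$ is strictly increasing on $(0,1)$, and to evaluate it at the two points $0 < |x| < a$. Monotonicity is transparent from the power series $g(t) = \sum_{n \ge 1} \frac{t^{n-1}}{n} = 1 + \frac{t}{2} + \frac{t^2}{3} + \cdots$, whose coefficients are all positive; hence $g(|x|) < g(a)$, and chaining the estimates yields $|\log(1+x)| \le |x|\,g(|x|) < \frac{-\log(1-a)}{a}\,|x|$, which is the desired strict bound.

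For the second inequality, I would rewrite the claim in the equivalent form $\frac{|x|}{|e^x - 1|} < \frac{a}{1 - e^{-a}}$ and recognize both sides as values of the single function $F(x) := \frac{x}{e^x - 1}$ (continuously extended by $F(0) = 1$). A short sign check shows $\frac{|x|}{|e^x-1|} = F(x)$ for every $x \neq 0$, because the numerator and denominator of $\frac{x}{e^x-1}$ share a sign, while $F(-a) = \frac{-a}{e^{-a}-1} = \frac{a}{1-e^{-a}}$. Thus the inequality becomes $F(x) < F(-a)$, and since $|x| < a$ forces $x > -a$, it suffices to prove that $F$ is strictly decreasing on $\mathbb{R}$. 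This I would verify by differentiating, $F'(x) = \frac{e^x(1-x) - 1}{(e^x-1)^2}$, and showing that the numerator $N(x) := e^x(1-x) - 1$ is negative for all $x \neq 0$: indeed $N(0) = 0$ and $N'(x) = -x\,e^x$ is positive for $x < 0$ and negative for $x > 0$, so $N$ attains its strict maximum value $0$ only at $x = 0$.

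The steps are all elementary; the only genuine points requiring care are the strictness of the inequalities (guaranteed by the strict monotonicity of $g$ and $F$ together with the strict inequality $|x| < a$, and clean only after setting aside the degenerate case $x = 0$) and the correct bookkeeping of signs when passing to absolute values for $x < 0$. I expect this sign bookkeeping in the second inequality, namely the identifications $\frac{|x|}{|e^x-1|} = F(x)$ and $\frac{a}{1-e^{-a}} = F(-a)$, to be the main (if minor) obstacle, since it is exactly where a careless computation could break the clean reduction to monotonicity of $F$.
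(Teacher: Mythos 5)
Your proof is correct. Note, however, that the paper itself offers no proof of this lemma: it is quoted verbatim from de Weger's monograph (Lemma 2.2 there), so there is no internal argument to compare against. Your two reductions are exactly the natural ones and both check out: the series bound $|\log(1+x)|\le-\log(1-|x|)$ combined with the strict monotonicity of $g(t)=-\log(1-t)/t$ (clear from its positive-coefficient expansion $\sum_{n\ge1}t^{n-1}/n$) gives the first inequality, and the identity $|x|/|e^x-1|=F(x)$ with $F(t)=t/(e^t-1)$ together with $F(-a)=a/(1-e^{-a})$ and the strict decrease of $F$ (numerator $e^x(1-x)-1$ of $F'$ has derivative $-xe^x$, hence a strict maximum value $0$ at $x=0$) gives the second, since $|x|<a$ forces $x>-a$. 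Your sign bookkeeping in the second part is right, and you correctly flag the one genuine blemish: as stated, with strict inequalities, the lemma fails at $x=0$ (both sides vanish), so it must be read with $x\neq0$ --- harmless here, since in the paper's applications the lemma is only invoked when $\Lambda_1,\Lambda_2\neq0$. The only cosmetic gap is the passage from $F'<0$ on $\mathbb{R}\setminus\{0\}$ to strict decrease across $0$, which needs the continuous extension $F(0)=1$ (or analyticity of $F$); you state the extension, so the argument is complete.
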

\begin{lemma}\label{lem 2.7} {\rm{(\cite{Sanchez}, Lemma 7)}}
If $m \geq 1$, $S \geq (4m^{2})^{m}$ and $\frac{x}{(\log x)^{m}} < S$, then $x < 2^{m} S (\log S)^{m}$.
\end{lemma}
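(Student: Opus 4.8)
The plan is to argue by monotonicity together with a contradiction. Write $g(x) := x/(\log x)^m$ for $x>1$. A direct computation of the derivative gives $g'(x) = (\log x - m)/(\log x)^{m+1}$, so $g$ is strictly increasing on the interval $(e^m,\infty)$. Set $T := 2^m S (\log S)^m$, the bound we must beat. I would first record that $T > e^m$: since the hypothesis $S \geq (4m^2)^m$ forces $\log S \geq m\log(4m^2) = 2m\log(2m) \geq 1$, we get $(\log S)^m \geq 1$ and hence $T \geq (8m^2)^m > e^m$. The whole statement then follows once I establish that $g(T) \geq S$: for if we had $x \geq T$, then both $x$ and $T$ lie in the region where $g$ increases, so $g(x) \geq g(T) \geq S$, contradicting the hypothesis $g(x) < S$; therefore $x < T$, which is exactly the claim. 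This contradiction format also conveniently sidesteps any issue with small $x$, since we only ever evaluate $g$ at arguments exceeding $e^m$.

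It thus remains to verify $g(T)\geq S$. Clearing denominators and taking $m$-th roots, this is equivalent to $2\log S \geq \log T$, and since $\log T = m\log 2 + \log S + m\log\log S$, it reduces to the clean inequality
\[
\log S - m\log\log S \geq m\log 2 .
\]
To close this, I would use the hypothesis in the form $L := \log S \geq 2m\log(2m) =: L_0$, together with the fact that $h(L) := L - m\log L$ is increasing for $L > m$ (note $L_0 > m$ for every $m\geq 1$, as $2\log(2m)>1$). Hence $h(L)\geq h(L_0)$, and a short computation gives $h(L_0) = m\bigl[\log(2m)-\log\log(2m)\bigr]$.

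Consequently the desired bound follows as soon as $\log(2m)-\log\log(2m) \geq \log 2$, i.e. $\log m \geq \log\log(2m)$, i.e. $m \geq \log(2m)$. This last inequality is the only genuine point to check, and it holds for all $m\geq 1$ since the function $m-\log(2m)$ equals $1-\log 2 > 0$ at $m=1$ and has non-negative derivative $1-1/m$ thereafter. This completes the reduction, and with it the proof. The main (if mild) obstacle is precisely this final elementary chain: one must feed the hypothesis $S\geq(4m^2)^m$ in at exactly the right place to dominate the troublesome $m\log\log S$ term, and it is reassuring that the constants line up with just enough room to spare rather than forcing a sharper hypothesis.
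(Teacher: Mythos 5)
The paper offers no proof of this lemma at all --- it is quoted verbatim from the cited source (S\'anchez and Luca, Lemma 7) in a section of recalled auxiliary results --- so there is no internal argument to compare against; judged on its own, your proof is correct and complete. The derivative computation $g'(x)=(\log x - m)/(\log x)^{m+1}$ is right; the contradiction format legitimately disposes of small arguments, since $T=2^mS(\log S)^m\geq (8m^2)^m>e^m$ places everything you evaluate in the region where $g$ increases; and the reduction of $g(T)\geq S$ to $\log S - m\log\log S\geq m\log 2$ is a valid chain of equivalences because $\log S>0$ and $\log T>0$ there. The closing steps also check out: $h(L)=L-m\log L$ is increasing for $L>m$, $L_0=2m\log(2m)>m$ since $2\log(2m)>1$ for $m\geq 1$, $h(L_0)=m\bigl[\log(2m)-\log\log(2m)\bigr]$ is computed correctly, and $m\geq\log(2m)$ holds for all $m\geq 1$ by your boundary-plus-derivative check (at $m=1$ the inequality $\log m\geq\log\log(2m)$ even holds with room, as $\log\log 2<0$). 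For context, the way such lemmas are usually proved --- and, as far as one can reconstruct, the route in the cited source --- runs in the opposite direction: take logarithms of $x<S(\log x)^m$ to get $\log x<\log S+m\log\log x$, bootstrap this to $\log x<2\log S$ using the size hypothesis on $S$, and substitute back to obtain $x<S(2\log S)^m=2^mS(\log S)^m$. Your endpoint-monotonicity formulation is an equivalent repackaging that turns on exactly the same critical inequality $\log S - m\log\log S\geq m\log 2$, at the modest cost of having to verify $T>e^m$ separately; in exchange it makes the strictness of the final bound and the handling of small $x$ completely transparent.
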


\section{Proof of Theorem \ref{thm1}}
Since  $P_{1}^{(k)} = 1 = N_{0} = N_{1} = N_{2}$, $P_{2}^{(k)} = 2 = N_{3}$ holds for $k \geq 2$ and $P_{4}^{(k)} = 13 = N_{8}$ holds for $k \geq 3$.  Therefore, we may assume that  $n \geq 5$. For $5 \leq  n \leq k+1$, we have that $P_{n}^{(k)} = F_{2n-1}$ where $F_{n}$ is the $n$th Fibonacci
number. So the equation \eqref{eq 1.2} becomes
\[
 F_{2n-1} = N_{m},
\]
which has no solution for $n \geq 5$ and $m \geq 0$. Therefore it has no solution in the range $5 \leq n \leq k+1$. From now, we assume that $n \geq k+2$ and $k \geq 2$. 
\subsection{An initial relation between $n$ and $m$ }
Combining the inequalities \eqref{eq 2.7} and \eqref{eq 2.12} together with equation \eqref{eq 1.2}, we have 
\[
\gamma^{n-2} \leq \alpha^{m-1} \quad \text{and} \quad \alpha^{m-2} \leq \gamma^{n-1}.
\]
Then, we deduce that  
\[
(n-2)\left( \frac{\log \gamma}{\log \alpha}  \right) +1  \leq m \leq (n-1) \left( \frac{\log \gamma}{\log \alpha} \right) + 2.
\]
Using the fact $\phi^ 2(1 - \phi ^{-k}) < \gamma(k) < \phi^{2}$ for all $k \geq 2$, it follows that
\begin{equation}\label{eq 3.14}
1.25n - 1.5 < m < 2.52n - 0.52.    
\end{equation}
\subsection{Upper bounds for $n$ and $m$ in terms of $k$}
By using \eqref{eq 1.2}, \eqref{eq 2.6} and \eqref{eq 2.11}, we obtain
\begin{equation*}
   g_{k}(\gamma) \gamma^{n} +e_{k}(n) = c_{\alpha} \alpha^{m+2} + c_{\beta} \beta^{m+2} + c_{\delta} {\delta}^{m+2}.
\end{equation*}
Taking absolute values on both sides of the above equality, it yields
\begin{equation}\label{eq 3.15}
\left| g_{k}(\gamma) \gamma^{n} - c_{\alpha} \alpha^{m+2} \right| < \frac{1}{2} + |c_{\beta} \beta^{m+2} + c_{\delta} {\delta}^{m+2}| < 1. 
\end{equation}
Dividing both sides of the above inequality by $c_{\alpha} \alpha^{m+2}$, we get
\begin{equation}\label{eq 3.16}
 \left| \left( c_{\alpha}^{-1} g_{k}(\gamma)\right) \gamma^{n}  \alpha^{-(m+2)} - 1 \right| <   \frac{2.4}{\alpha^{m}}.
\end{equation}
Let
\begin{equation}\label{eq 3.17}
\Lambda_{1} := \left( c_{\alpha}^{-1} g_{k}(\gamma)\right) \gamma^{n}  \alpha^{-(m+2)} - 1.
\end{equation}
From \eqref{eq 3.16}, we have 
\begin{equation}\label{eq 3.18}
    |\Lambda_{1}| < 2.4 \cdot \alpha^{-m}.
\end{equation}
Suppose that $\Lambda_{1}= 0$, then we get
\[
g_{k}(\gamma) = c_{\alpha} \alpha^{m+2}\gamma^{-n},
\]
which implies that $g_{k}(\gamma)$ is an algebraic integer, which is a contradiction. Hence $\Lambda_{1} \neq 0$. Therefore, we apply Theorem \ref{thm2} to get a lower bound for $\Lambda_{1}$ given by \eqref{eq 3.17} with the parameters:
\[
\eta_{1} := c_{\alpha}^{-1} g_{k}(\gamma), \quad \eta_{2} := \gamma , \quad \eta_{3} := \alpha,
\]
and
\[ a_{1}:= 1, \quad a_{2}:= n, \quad a_{3}:= -(m+2).
\]
Note that the algebraic numbers $\eta_{1}, \eta_{2}, \eta_{3}$ belongs to the field  $\mathbb{L} := \mathbb{Q}(\gamma, \alpha)$, so we can assume $d_{\mathbb{L}} = [\mathbb{L}:\mathbb{Q}] \leq 3k$. Since $h(\eta_{2}) = (\log \gamma) / k < 2 \log \phi/k$ and $h(\eta_{3}) = (\log \alpha) / 3$, it follows that 
\[
\max\{3kh(\eta_{2}),|\log \eta_{2}|,0.16\} = 6\log \phi := B_{2}
\]
and 
\[
\max\{3kh(\eta_{3}),|\log \eta_{3}|,0.16\} = k\log \alpha := B_{3}.
\]
Since $h(c_{\alpha}) = \frac{\log 31}{3}$. Therefore, by the estimate \eqref{eq 2.8} and the properties of logarithmic height, it follows that for all $k \geq 2$
\begin{align*}
 h(\eta_{1})  & \leq h(c_{\alpha}) + h(g_{k}(\gamma)) \\
 & <  \frac{\log 31}{3} + 4k\log \phi + k \log(k+1) \\
 & < 5.2 k \log k.  
\end{align*}
Thus, we obtain
\[
\max\{3kh(\eta_{1}),|\log \eta_{1}|,0.16\} = 15.6 k^2 \log k := B_{1}.
\]
In addition, by \eqref{eq 3.14} we can take $D:= 2.52 n + 2$. Then by Theorem \ref{thm2}, we have
\begin{equation}\label{eq 3.19}
\log |\Lambda_{1}| >   -1.432 \times 10^{11} (3k)^{2} (1+ \log3k) (1 +\log (2.52n +2) (15.6 k^{2} \log k) (6\log \phi) (k \log \alpha).   
\end{equation}
From the comparison of lower bound \eqref{eq 3.19} and upper bound \eqref{eq 3.18} of   $|\Lambda_{1}|$ gives us
\[
m \log \alpha - \log 2.4 < 2.22 \times 10^{13} k^{5} \log k (1 + \log 3k)(1+ \log (2.52n+ 2)).
\]
Using the facts
$1+\log 3k < 4.1 \log k$ for all $k \geq 2$ and $1+ \log (2.52n+ 2)) < 2.6 \log n$ for all $n \geq 4$, we conclude that 
\[
m < 6.23 \times 10^{14} k^{5} \log^{2} k \log n.
\]
Using \eqref{eq 3.14}, the last inequality becomes
\begin{equation}\label{eq 3.20}
    \frac{n}{\log n} < 5 \times 10^{14} k^{5} \log^{2} k.
\end{equation}
Thus, putting  $S := 5 \times 10^{14} k^{5} \log^{2} k$ in \eqref{eq 3.20} and using Lemma \ref{lem 2.7} together with $33.84 +5\log k + 2 \log \log k < 52.8 \log k$ for all $k \geq 2$, gives
\begin{align*}
    n &< 2 \left( 5 \times 10^{14} k^{5} \log^{2} k \right ) \log \left( 5 \times 10^{14} k^{5} \log^{2} k \right) \\
    &< (1 \times 10^{15} k^{5} \log^{2} k )(33.84 + 5\log k + 2 \log \log k ) \\
    &< 5.28 \times 10^{16} k^{5} \log^{3} k.
\end{align*}
The result established in this subsection is summarized in the following lemma.
\begin{lemma}\label{lem3.1}
If $(n, k, m)$ is an integer solution   of \eqref{eq 1.2} with $k \geq 2$ and $n \geq k+2$, then the inequalities 
\begin{equation}\label{eq 3.21}
0.39m < n < 5.28 \times 10^{16} k^{5} \log^{3} k
\end{equation}
hold.
\end{lemma}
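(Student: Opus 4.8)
The plan is to use the Binet representations of both sequences to reduce equation \eqref{eq 1.2} to the statement that a certain linear form in three logarithms is exponentially small, and then invoke Matveev's theorem (Theorem \ref{thm2}) to bound that smallness from below. First I would combine \eqref{eq 2.6} and \eqref{eq 2.11}, writing $P_n^{(k)} = g_k(\gamma)\gamma^n + e_k(n)$ with $|e_k(n)| \leq 1/2$, and $N_m = c_\alpha \alpha^{m+2} + (c_\beta \beta^{m+2} + c_\delta \delta^{m+2})$, where the final parenthesis is bounded by $1/2$ for $m \geq 1$. Equating these and moving the small terms to one side yields $|g_k(\gamma)\gamma^n - c_\alpha \alpha^{m+2}| < 1$; dividing by $c_\alpha \alpha^{m+2}$ produces a quantity within $2.4\,\alpha^{-m}$ of $1$, whose excess over $1$ I would call $\Lambda_1$.

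Next I would check that $\Lambda_1 = c_\alpha^{-1}g_k(\gamma)\gamma^n \alpha^{-(m+2)} - 1 \neq 0$, since Matveev's bound requires a nonvanishing form. If $\Lambda_1$ vanished, rearranging would give $g_k(\gamma) = c_\alpha \alpha^{m+2}\gamma^{-n}$, forcing $g_k(\gamma)$ to be an algebraic integer and contradicting the bounds $0.276 < g_k(\gamma) < 0.5$ of Lemma \ref{lem 2.1}. With nonvanishing in hand I would apply Theorem \ref{thm2} to $\Lambda_1$ with $s = 3$, $\eta_1 = c_\alpha^{-1}g_k(\gamma)$, $\eta_2 = \gamma$, $\eta_3 = \alpha$ and exponents $a_1 = 1$, $a_2 = n$, $a_3 = -(m+2)$, all lying in $\mathbb{L} = \mathbb{Q}(\gamma, \alpha)$ of degree $d_{\mathbb{L}} \leq 3k$.

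The heart of the argument, and the step I expect to be most delicate, is producing clean height estimates for the $\eta_j$. For $\eta_2$ and $\eta_3$ this is routine: since $\gamma$ has degree $k$ and $\alpha$ degree $3$, one gets $h(\eta_2) = (\log\gamma)/k < 2\log\phi/k$ and $h(\eta_3) = (\log\alpha)/3$, giving $B_2 = 6\log\phi$ and $B_3 = k\log\alpha$. The awkward factor is $\eta_1 = c_\alpha^{-1}g_k(\gamma)$: here I would use subadditivity of the height together with the explicit bound \eqref{eq 2.8} on $h(g_k(\gamma))$ and $h(c_\alpha) = (\log 31)/3$ to obtain $h(\eta_1) < 5.2\,k\log k$, hence $B_1 = 15.6\,k^2\log k$. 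Taking $D = 2.52n + 2$ from \eqref{eq 3.14}, Matveev then yields a lower bound for $\log|\Lambda_1|$ of the shape $-c\,k^5\log k\,(1+\log 3k)(1+\log(2.52n+2))$ with an explicit constant $c$.

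Finally I would compare this with the upper bound $|\Lambda_1| < 2.4\,\alpha^{-m}$. Taking logarithms gives $m\log\alpha - \log 2.4 < c'\,k^5\log k(1+\log 3k)(1+\log(2.52n+2))$; absorbing the logarithmic factors via $1+\log 3k < 4.1\log k$ and $1+\log(2.52n+2) < 2.6\log n$ yields $m < 6.23\times 10^{14}k^5\log^2 k\,\log n$, and then \eqref{eq 3.14} converts this into $n/\log n < 5\times 10^{14}k^5\log^2 k =: S$. Applying Lemma \ref{lem 2.7} with the exponent $1$ turns this into the explicit bound $n < 5.28\times 10^{16}k^5\log^3 k$ after simplifying the resulting logarithmic factor. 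The remaining inequality $0.39m < n$ is immediate from the right-hand side of \eqref{eq 3.14}, since $m < 2.52n - 0.52 < 2.52n$ gives $n > m/2.52 > 0.39m$. Together these establish both bounds claimed in the lemma.
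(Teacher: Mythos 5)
Your proposal follows the paper's own proof essentially step for step: the same $\Lambda_1 = c_\alpha^{-1}g_k(\gamma)\gamma^n\alpha^{-(m+2)} - 1$ with upper bound $2.4\,\alpha^{-m}$, the same Matveev data ($d_{\mathbb{L}} \leq 3k$, $B_1 = 15.6\,k^2\log k$, $B_2 = 6\log\phi$, $B_3 = k\log\alpha$, $D = 2.52n+2$), the same absorption constants $4.1\log k$ and $2.6\log n$, and the same finish via Lemma \ref{lem 2.7}, so it is correct in the same sense and to the same degree of rigor as the paper. One small caveat: your justification of $\Lambda_1 \neq 0$ --- that being an algebraic integer would contradict $0.276 < g_k(\gamma) < 0.5$ --- is not by itself a contradiction (algebraic integers can lie in that interval); the clean argument uses the other half of Lemma \ref{lem 2.1}, namely $|g_k(\gamma_i)| < 1$ for all conjugates, so that the norm of $g_k(\gamma)$ would be a nonzero rational integer of absolute value less than $1$ --- though the paper's own one-line treatment of this point is no more detailed than yours.
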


\subsection{The case of small $k$}
In this subsection, we treat the cases when $k \in [2, 360]$. Here for each value of $k$, Lemma \ref{lem3.1} provides an absolute upper bound on $n$ which is very large and will be reduced by Lemma \ref{lem 2.5}. In order to apply Lemma \ref{lem 2.5}, let
\begin{equation}\label{eq 3.22}
\Gamma_{1} := n\log \gamma -(m+2) \log \alpha + \log \left( c_{\alpha}^{-1} g_{k}(\gamma)\right).   
\end{equation}
Then $e^{\Gamma_{1}} - 1 = \Lambda_{1}$, where $\Lambda_{1}$ is defined by \eqref{eq 3.17}. Therefore, \eqref{eq 3.18} implies that
\begin{equation}\label{eq 3.23}
    |e^{\Gamma_{1}} -1 | < \frac{2.4}{\alpha^{m}} < 0.77
\end{equation}
for $m \geq 3$. Choosing $a := 0.77$, we obtain the inequality
\[
|\Gamma_{1}| = |\log (\Lambda_{1} + 1)| < \frac{- \log(1 - 0.77)}{0.77} \cdot \frac{2.4}{\alpha^{m}} < \frac{4.59}{\alpha^{m}}
\]
by Lemma \ref{lem 2.6}. Thus, it follows that
\begin{equation*}
0 < \left|n\log \gamma -(m+2) \log \alpha + \log \left( c_{\alpha}^{-1} g_{k}(\gamma)\right) \right| <  \frac{4.59}{\alpha^{m}}. 
\end{equation*}
Dividing this inequality by $\log \alpha$, we get 
 \begin{equation}\label{eq 3.24}
 \left| n \left( \frac{\log \gamma}{\log \alpha}\right) - m + \left(  \frac{\log \left(c_{\alpha}^{-1} g_{k}(\gamma)\right)}{\log \alpha} - 2 \right) \right| < 12.1 \cdot \alpha^{-m}.   
\end{equation}
With 
\[ \widehat{\tau} = \widehat{\tau}(k) := \frac{\log \gamma}{\log \alpha} ,  \quad \widehat{\mu} = \widehat{\mu}(k):= \frac{\log \left(c_{\alpha}^{-1} g_{k}(\gamma)\right)}{\log \alpha} - 2, \quad A:= 12.1 \quad \text{and} \quad C:= \alpha, 
\]
equation \eqref{eq 3.24} becomes
\begin{equation}\label{eq 3.25}
    0 <  |n \widehat{\tau} - m + \widehat{\mu}| < A \cdot C^{-m}.
\end{equation}
Clearly $\widehat{\tau}$ is an irrational number. We take $ M_{k} := \lfloor 5.28 \times 10^{16} k^{5} \log^{3} k  \rfloor$ which is an upper bound on $n$. Then by  Lemma \ref{lem 2.5}  for each $ k \in [2, 360]$, we have that 
\[
m < \frac{\log(Aq/\epsilon)}{\log C},
\]
where $ q = q(k) > 6M_{k}$ is a denominator of a convergent of the continued fraction of $\widehat{\tau}$ with $ \epsilon = \epsilon(k):= \| \widehat{\mu} q\| - M_{k}\|\widehat{\tau} q\| > 0$. A computer search with \textit{Mathematica} found that for $k \in [2, 360]$, the maximum value of $\log(Aq/\epsilon)/ \log C $ is $\leq 329$. Therefore  $m \leq 329$ and $n \leq 265$, since $n < (m + 1.5)/1.25$.

Finally, a brute force search with \textit{Mathematica} to compare $P_{n}^{(k)}$ and $ N_{m}$ in the range
\[
2 \leq k \leq 360, \quad 4 \leq n \leq 265, \quad \text{and} \quad 3 \leq m \leq 329
\]
with $m < n/0.39$ provides the only solution $P_{6}^{(4)} = N_{13}$ for the equation \eqref{eq 1.2}. This concludes the analysis of the case $k \in [2, 360]$. 

\subsection{The case of large $k$}
We now suppose that $k > 360$ and note that for such $k$ we have
\[
0.39m < n < 5.28 \times 10^{16} k^{5} \log^{3} k < \phi^{k / 2}.
\]
Here, it follows from \eqref{eq 1.2}, \eqref{eq 2.9} and \eqref{eq 3.15} that 
\[ \left| \frac{\phi ^{2n}}{\phi + 2} - c_{\alpha} \alpha^{m+2} \right| < \left| g_{k}(\gamma) \gamma^{n} - c_{\alpha} \alpha^{m+2} \right| + \frac{\phi ^{2n}|\xi|}{\phi + 2} <  1 + \frac{4 \phi ^{2n}}{(\phi + 2)\phi^{k/2}}.
\]
Dividing both sides of the above inequality  by $\frac{\phi ^{2n}}{\phi + 2}$ and using the fact $1/\phi ^{2n} < 1/\phi^{k/2}$ for all $n \geq k+2$ yield 
\begin{equation}\label{eq 3.26}
\left|\left( c_{\alpha}(\phi + 2)\right) \phi ^{-2n} \alpha^{m+2} -1 \right| < \frac{(\phi + 2)}{\phi^{2n}} + \frac{4}{\phi^{k/2}} < \frac{7.62}{\phi^{k/2}}. 
\end{equation}
In order to use Theorem \ref{thm2}, we take
\[
(\eta_{1}, a_{1}) := (c_{\alpha}(\phi + 2), 1), \quad \quad (\eta_{2}, a_{2}) := (\phi, -2n),~ \quad {\rm and} ~  \quad (\eta_{3}, a_{3}) := (\alpha, m+2).
\]
The number field containing $\eta_{1}, \eta_{2}, \eta_{3}$ is  $\mathbb{L} := \mathbb{Q}(\phi, \alpha)$, which has degree $d_{\mathbb{L}} = [\mathbb{L}:\mathbb{Q}]=6$. Here 
\begin{equation}\label{eq 3.27}
    \Lambda_{2} := \left( c_{\alpha}(\phi + 2)\right) \phi ^{-2n} \alpha^{m+2} - 1,
\end{equation}
is nonzero.  In contrast to this, assume that $\Lambda_{2} = 0$, then we would get $\phi^{2n}/ \phi + 2 = c_{\alpha} \alpha^{m+2}$. Using the $\mathbb{Q}$-automorphism
$(\alpha, \beta)$ of the Galois extension $\mathbb{Q}(\phi, \alpha, \beta)$ over $\mathbb{Q}$  we get that $12 < \phi^{2n}/ \phi + 2 < |c_{\beta}| |\beta|^{m+2} < 1$, which is impossible. Hence $\Lambda_{2} \neq 0$. Moreover, since 
\[
h(\eta_{2}) = \frac{\log \phi}{2}, \quad \quad  h(\eta_{3}) = \frac{\log \alpha}{3}
\]
and
\[
h(\eta_{1}) \leq h(c_{\alpha}) + h(\phi) + 2 \log 2 < 2.8,
\]
it follows that $B_{1}:= 16.8, B_{2}:= 1.45$ and $B_{3}:= 0.77$. Also since $m <2.52 n$, we can take $D:= 2.52n +2$. Thus, taking into account inequality \eqref{eq 3.26} and applying Theorem \ref{thm2}, we obtain 
\[
\frac{k}{2} \log \phi - \log 7.62 <  2.7 \times 10^{14} \times (1 + \log(2.52 n +2)).
\]
This implies that 
\[
k < 1.58 \times 10^{15} \log n,
\]
where $1 + \log(2.52 n +2) < 1.4 \log n$ for $n \geq k+2 > 362$. By using Lemma \ref{lem3.1} and the fact $38.51 + 5 \log k +3 \log (\log k) < 12.5 \log k$ for $k > 360$, we get 
\begin{align*}
k & < 1.58 \times 10^{15} \log (5.28 \times 10^{16} k^{5} \log^{3} k) \\
& <  1.58 \times 10^{15} (38.51 + 5 \log k + 3 \log (\log k)) \\
& < 2 \times 10^{16} \log k.
\end{align*}
Solving the above inequality gives
\[
k < 1.51 \times 10^{18}.
\]
Substituting this bound of $k$ into \eqref{eq 3.21}, we get $n < 3.1 \times 10^{112}$, which implies that $m < 7.95 \times 10^{112}$.

\noindent
Now, let
\begin{equation}\label{eq 3.28}
\Gamma_{2} := (m+2) \log \alpha - 2n\log \phi + \log \left(c_{\alpha}(\phi + 2 \right)),   
\end{equation}
and $\Lambda_{2} := e^{\Gamma_{2}} - 1$. Then 
\begin{equation}\label{eq 3.29}
    |e^{\Gamma_{2}} -1 | < \frac{7.62}{\phi^{k/2}} < 0.1,
\end{equation}
since $k \geq 360$. Choosing $a := 0.1$, we obtain the inequality
\[
|\Gamma_{2}| = |\log (\Lambda_{2} + 1)| < \frac{- \log(1 - 0.1)}{0.1} \cdot \frac{7.62}{\phi^{k/2}} < \frac{8.1}{\phi^{k/2}}
\]
by Lemma \ref{lem 2.6}. Thus, it follows that
\begin{equation*}
0 < \left| (m+2) \log \alpha - 2n\log \phi + \log \left( c_{\alpha}(\phi + 2) \right) \right| < \frac{8.1}{\phi^{k/2}}.
\end{equation*}
Dividing the above inequality by $\log \phi$, we get 
 \begin{equation}\label{eq 3.30}
 \left| m \frac{\log \alpha}{\log \phi} - 2n + \frac{\log \left(\alpha^{2} c_{\alpha}(\phi + 2) \right)}{\log \phi} \right| < \frac{16.84}{\phi^{k/2}}. 
\end{equation}
To apply Lemma \ref{lem 2.5}, we put
\[ \widehat{\tau} := \frac{\log \alpha}{\log \phi},  \quad \widehat{\mu} :=  \frac{\log \left(\alpha^{2} c_{\alpha}(\phi + 2) \right)}{\log \phi}, \quad A:= 16.84 \quad \text{and} \quad C:= \phi.
\]
If we take $M := 7.95 \times 10^{112}$, which is an upper bound on $m$, we found that $q_{236}$, the denominator of the $236th$ convergent of $\widehat{\tau}$ exceeds $6M$. Furthermore, a quick computation with \textit{Mathematica} gives us the value
\[
\frac{\log(Aq_{236}/\epsilon)}{\log C}
\]
is less than 556. So, if the inequality \eqref{eq 3.30} has a solution, then
\[
\frac{k}{2} < \frac{\log(Aq_{236}/\epsilon)}{\log C} < 556,
\]
which implies that $k \leq 1112$.

\noindent
With the above upper bound for $k$ and by Lemma \ref{lem3.1}, we have
\[
n < 3.1 \times 10^{34} \quad \text{and} \quad m < 7.96 \times 10^{34}.
\]
We apply again Lemma \ref{lem 2.5} to \eqref{eq 3.30} with $M:= 7.96 \times 10^{34}$ and $q = q_{76}$. As a result, we get $k \leq 368$. Hence, we deduce
\[
n < 7.35 \times 10^{31} \quad \text{and} \quad m < 1.89 \times 10^{32}.
\]
In the third application with $M:= 1.89 \times 10^{32}$, we get that $q = q_{72}$ satisfies the conditions of Lemma \ref{lem 2.5} and $k < 342$, which contradicts our assumption that $k > 360$. This completes the proof.

\vspace{05mm} \noindent \footnotesize
\begin{minipage}[b]{90cm}
\large{Department of Mathematics,\\
School of Applied Sciences, \\ 
KIIT University, Bhubaneswar, \\ 
Bhubaneswar 751024, Odisha, India. \\
Email: bptbibhu@gmail.com}
\end{minipage}

\vspace{05mm} \noindent \footnotesize
\begin{minipage}[b]{90cm}
\large{Department of Mathematics,\\
School of Applied Sciences, \\ 
KIIT University, Bhubaneswar, \\ 
Bhubaneswar 751024, Odisha, India. \\
Email: bijan.patelfma@kiit.ac.in; iiit.bijan@gmail.com}
\end{minipage}

\end{document}